\newtheoremstyle{Teorema}{5pt}{5pt}{\it}{}{\bf}{.}{ }{}
\theoremstyle{Teorema}
\newtheorem{Theorem}{Theorem}
\newtheorem{Corollary}[Theorem]{Corollary}
\newtheorem{Proposition}[Theorem]{Proposition}
\newtheoremstyle{Annotazione}{5pt}{5pt}{\rm}{}{\bf}{.}{ }{}
\theoremstyle{Annotazione}
\def\Aut{\operatorname{Aut}}
\def\Out{\operatorname{Out}}
\def\Inn{\operatorname{Inn}}
\def\qbar{\overline{\bbq }}
\def\pioneet{\pi_1^{\text{\textup{\'{e}t}}}}
\def\SL{\mathrm{SL}}
\def\PSL{\mathrm{PSL}}
\def\GL{\mathrm{GL}}
\def\bbz{\mathbb{Z}}
\def\bbr{\mathbb{R}}
\def\bbq{\mathbb{Q}}
\def\bbc{\mathbb{C}}
\def\bbp{\mathbb{P}}
\def\bbf{\mathbb{F}}
\def\bbn{\mathbb{N}}
\def\Gal{\operatorname{Gal}}
\def\Jac{\operatorname{Jac}}
\def\defined{\overset{\mathrm{def}}{=}}
\def\blfootnote{\xdef\@thefnmark{}\@footnotetext}
\begin{document}

\title{The Galois action on Hurwitz curves}
\author{Robert A. Kucharczyk}
\address{Universit\"{a}t Bonn\\ Mathematisches Institut\\ Endenicher Allee 60\\ 53115 Bonn\\ Germany}
\email{rak@math.uni-bonn.de}
\thanks{Research supported by the European Research Council}
\keywords{Hurwitz curves, dessins d'enfants, origamis, Galois actions}
\subjclass[2010]{11F80, 11G32, 14H37}
\begin{abstract}
We make some observations concerning the Galois actions on Hurwitz curves and on the closely related but lesser-known Hurwitz origamis.
\end{abstract}
\maketitle

\thispagestyle{empty}

\tableofcontents

\noindent In this note we draw some consequences concerning Hurwitz curves of a recent result in the theory of dessins d'enfants by Gonz\'{a}lez-Diez and Jaikin-Zapirain (Theorem~\ref{GDJZ} below). The material presented here is a revision of part of Chapter~3 of the author's dissertation~\cite{MyThesis}, with significant changes in style and content.

\section{Faithful Galois actions on some classes of dessins}

\noindent Recall that a \emph{Bely\u{\i} pair} $(X,\beta )$ consists of a smooth projective curve $X$ over the algebraic closure $\qbar\subset\bbc$ of the rationals and a nonconstant rational map of algebraic curves $\beta\colon X\to\bbp^1_{\qbar }$ which is unramified outside $\{ 0,1,\infty \} \subset\bbp^1(\qbar )$. It is well-known (and a major motivation for the study of Bely\u{\i} pairs) that there is a correspondence, one-to-one up to isomorphism, between Bely\u{\i} pairs and the comparatively elementary combinatorial objects known as \emph{dessins d'enfants}. We assume these notions as known, otherwise we refer the reader to the many existing introductions including \cite{BogomolovHusemoeller2000, MR2895884, MR1483107, MR2349672, MR2516744, MR2070647, MR2074061, MR1305393}.

Since the absolute Galois group $\Gal (\qbar /\bbq )$ acts on the isomorphism classes of Bely\u{\i} pairs in an obvious way, we obtain a far-from-obvious action on the isomorphism classes of dessins d'enfants via this correspondence. The foundational result about dessins d'enfants is the following result, already implicit in \cite{MR1483107} but, to the author's knowledge, first stated explicitly in~\cite{MR1305393}:
\begin{Theorem}[A.~Grothendieck, L.~Schneps, \ldots ]\label{GalFaithfulOnAllDessins}
The action of $\Gal (\qbar /\bbq )$ on isomorphism classes of dessins d'enfants is faithful.
\end{Theorem}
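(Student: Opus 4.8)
The plan is to show that the kernel of the action is trivial: if $\sigma\in\Gal(\qbar/\bbq)$ fixes every isomorphism class of dessins d'enfants, then $\sigma=\id$. Since the action on dessins is transported from the tautological action on isomorphism classes of Bely\u{\i} pairs, $(X,\beta)\mapsto(X^\sigma,\beta^\sigma)$, it suffices to produce, for each nontrivial $\sigma$, a single Bely\u{\i} pair that $\sigma$ moves. First I would isolate a numerical invariant of the underlying curve $X$ that is simultaneously Galois-equivariant and surjective onto $\qbar$ as $X$ varies; the genus-one $j$-invariant is the natural candidate, so I would run the whole argument over elliptic curves.

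Concretely, by Bely\u{\i}'s theorem every curve defined over $\qbar$, and in particular every elliptic curve $E/\qbar$, admits a nonconstant map $\beta\colon E\to\bbp^1$ unramified outside $\{0,1,\infty\}$, so $(E,\beta)$ is a Bely\u{\i} pair and corresponds to a dessin. If $\sigma$ lies in the kernel of the action, then this dessin is fixed, whence $(E^\sigma,\beta^\sigma)\cong(E,\beta)$ as Bely\u{\i} pairs; forgetting $\beta$ gives an isomorphism $E^\sigma\cong E$ of curves, and therefore $j(E^\sigma)=j(E)$. On the other hand, the $j$-invariant is defined by polynomial identities with rational coefficients, hence is Galois-equivariant: $j(E^\sigma)=\sigma(j(E))$. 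Combining the two identities yields $\sigma(j(E))=j(E)$.

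Finally, every element of $\qbar$ occurs as $j(E)$ for a suitable elliptic curve $E/\qbar$ (one writes down an explicit Weierstrass model realizing a prescribed $j$-invariant), so the fixed-point relation above holds on all of $\qbar$, forcing $\sigma=\id$ and hence faithfulness. The only genuinely deep ingredient is Bely\u{\i}'s theorem, which is precisely what guarantees that these elliptic curves carry Bely\u{\i} maps and thus enter the correspondence; everything else is formal. The one point demanding care is the passage from \emph{the dessin is fixed up to isomorphism} to \emph{the underlying curve is fixed up to isomorphism}, where one must keep track of the exact sense in which the Galois action on dessins is defined through the Bely\u{\i} correspondence, so that the equivariance of $j$ may be applied without ambiguity.
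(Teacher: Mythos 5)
Your proposal is correct and takes essentially the same route as the paper: Bely\u{\i}'s theorem provides a Bely\u{\i} pair on every curve over $\qbar$, and faithfulness is reduced to the faithful Galois action on isomorphism classes of curves themselves, which you verify concretely by running over elliptic curves and using the equivariance and surjectivity of the $j$-invariant. The paper simply asserts that $\Gal(\qbar/\bbq)$ ``certainly acts faithfully'' on isomorphism classes of curves; your $j$-invariant argument is the standard way of making that assertion precise, so the two proofs coincide in substance.
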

This is essentially a consequence of Bely\u{\i}'s theorem \cite[Theorem~4]{MR534593} that every smooth projective curve over $\qbar$ can be completed to a Bely\u{\i} pair, and $\Gal (\qbar /\bbq )$ certainly acts faithfully on the isomorphism classes of such curves.

Theorem~\ref{GalFaithfulOnAllDessins} has prompted many people, most notably Grothendieck himself, to study dessins d'enfants and their Galois actions, mainly with the hope of obtaining new information about the notorious group $\Gal (\qbar /\bbq )$. It is perhaps too early to judge whether this hope was realistic; nonetheless, the theory of dessins provides us with many fine examples where algebraic number theory interacts with the geometry of algebraic curves.

Since the class of dessins d'enfants is rather large and poor in structure, a natural question following Theorem~\ref{GalFaithfulOnAllDessins} is whether there exist some highly restrictive classes of dessins on which $\Gal (\qbar /\bbq )$ still acts faithfully. For instance, $\Gal (\qbar /\bbq )$ acts faithfully on the set of dessins whose underlying surface is the sphere and whose graph is a tree, see \cite{MR1305393}.

To state the theorem alluded to in the beginning, let us call a dessin \emph{regular} if the corresponding Bely\u{\i} map $\beta\colon X\to\bbp^1_{\qbar }$ is a Galois covering. It is of \emph{type} $(p,q,r)$ if the ramification orders of $\beta$ at $0$, $1$ and $\infty$ divide $p$, $q$ and $r$, respectively.
\begin{Theorem}[G.~Gonz\'{a}lez-Diez, A.~Jaikin-Zapirain]\label{GDJZ}
Let $p,q,r\in\bbn$ with
\begin{equation}\label{HyperbolicInequality}
\frac 1p+\frac 1q+\frac 1r<1.
\end{equation}
Then $\Gal (\qbar /\bbq )$ acts faithfully on the set of isomorphism classes of regular dessins d'efants of type $(p,q,r)$.
\end{Theorem}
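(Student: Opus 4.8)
The plan is to recast the statement as one about the outer Galois action on a profinite triangle group and then to deduce faithfulness from the already-established Theorem~\ref{GalFaithfulOnAllDessins}. First I would invoke the standard dictionary: because \eqref{HyperbolicInequality} holds, the triangle group $\Delta = \Delta(p,q,r) = \langle x,y,z \mid x^p = y^q = z^r = xyz = 1\rangle$ is an infinite (cocompact Fuchsian) group, and regular dessins of type $(p,q,r)$ correspond bijectively to its finite quotients equipped with their distinguished generating triple, equivalently to open normal subgroups of its profinite completion $\widehat\Delta$. Geometrically $\widehat\Delta$ is the étale fundamental group of the stacky projective line $\mathcal P(p,q,r)$ over $\qbar$ carrying cone points of orders $p,q,r$ at $0,1,\infty$, so the homotopy exact sequence furnishes an outer representation $\rho\colon \Gal(\qbar/\bbq) \to \Out(\widehat\Delta)$ whose induced action on open normal subgroups is exactly the Galois action on regular dessins of type $(p,q,r)$. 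The theorem is therefore equivalent to the faithfulness of this action, and it suffices to show that for each nontrivial $\sigma$ there is a finite quotient of $\Delta$ that $\sigma$ does not fix.

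Second, I would set up the bridge to honest curves. Since $\Delta$ is residually finite and virtually torsion-free, it contains a torsion-free normal subgroup $\Gamma \trianglelefteq \Delta$ of finite index; then $S = \bbh/\Gamma$ is a compact Riemann surface and the composite $S \to \mathcal P(p,q,r) \to \bbp^1_{\qbar}$ is a Belyi map all of whose ramification indices divide $p,q,r$, that is, a dessin of type $(p,q,r)$ whose Galois closure is a regular dessin of the same type. Thus the finite quotients of $\Delta$ are precisely the deck groups of regular Belyi covers of type $(p,q,r)$, and the curves $S$ so obtained are exactly those uniformized by finite-index subgroups of $\Delta$. The point of this reduction is that $\sigma$ can now be detected through a genuine arithmetic invariant of such a curve, for instance its Jacobian $\Jac S$ together with the residual Galois action on the isomorphism class of the pair consisting of $S$ and its deck action, provided we can arrange the distinguishing curve to be of the prescribed type.

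The main obstacle is exactly this last proviso: transporting a $\sigma$-moved object supplied by Theorem~\ref{GalFaithfulOnAllDessins} into the rigidly constrained class of type $(p,q,r)$. The tempting shortcut fails, since the natural map $F_2 = \pi_1(\bbp^1 - \{0,1,\infty\}) \twoheadrightarrow \Delta$ killing $x^p, y^q, z^r$ is Galois-equivariant and induces $\widehat{F_2} \twoheadrightarrow \widehat\Delta$, but a surjection runs the wrong way to push faithfulness from the larger group to its quotient. Hence the real content is an abundance statement, namely that $\Delta(p,q,r)$ possesses enough finite quotients to separate all of $\Gal(\qbar/\bbq)$, and I expect this to be the hard step. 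I would attack it by proving that, given $\sigma$ with $\sigma C \not\cong C$ for some curve $C$ over $\qbar$, one can realize a regular dessin of type $(p,q,r)$ whose underlying curve carries the same distinguishing invariant, say via the $\sigma$-action on a suitable piece of $H_1$, equivalently on the torsion of $\Jac$, using the richness of finite quotients of Fuchsian groups to control both the ramification type and the arithmetic simultaneously. Marrying this abundance with Galois-equivariance is the crux on which the whole argument turns.
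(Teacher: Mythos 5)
Your first paragraph sets up exactly the right framework --- the same one the paper works in: regular dessins of type $(p,q,r)$ correspond to open normal subgroups of $\hat{\Delta}$, and the homotopy exact sequence for the stacky $\bbp^1$ gives an outer representation $\varrho_{\Delta}\colon\Gal(\qbar/\bbq)\to\Out\hat{\Delta}$ inducing the Galois action on those subgroups. You also correctly identify that the naive reduction from Theorem~\ref{GalFaithfulOnAllDessins} fails because the surjection $\widehat{F_2}\twoheadrightarrow\hat{\Delta}$ runs the wrong way. But from there your proposal stops being a proof: you explicitly defer the ``abundance'' step (producing, for each nontrivial $\sigma$, a finite quotient of $\Delta$ that $\sigma$ moves) to a hoped-for construction via Jacobians and $H_1$, and you call marrying this with Galois equivariance ``the crux on which the whole argument turns.'' That crux is precisely the theorem; no known argument fills it by transporting a $\sigma$-moved curve into the class of curves uniformized by finite-index subgroups of $\Delta(p,q,r)$, and the paper expressly remarks that the actual proof goes ``in the opposite direction'' from the one you attempt.

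Concretely, the proof of Theorem~\ref{GDJZ} in \cite{GonzalezDiez2013} rests on two ingredients, neither of which appears in your proposal. First, the injectivity of $\varrho_{\Delta}$ is \emph{not} deduced from any faithfulness statement about curves or dessins; it is imported from anabelian geometry, namely \cite[Theorem~C]{MR2895284} of Hoshi and Mochizuki. Second, injectivity of $\varrho_{\Delta}$ alone does not suffice, because a nontrivial outer automorphism could a priori still stabilize every open normal subgroup; one needs the group-theoretic rigidity result (\cite[Theorem~27]{GonzalezDiez2013}) that a continuous automorphism of $\hat{\Delta}$ preserving all open normal subgroups is already inner. With these two facts the argument closes in three lines, exactly as in the sketch the paper gives for Theorem~\ref{TheoremC}: if $\sigma$ fixes every regular dessin of type $(p,q,r)$, then $\varrho_{\Delta}(\sigma)$ preserves every open normal subgroup of $\hat{\Delta}$, hence is inner, hence trivial in $\Out\hat{\Delta}$, hence $\sigma=\id$ by anabelian injectivity. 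Your reduction is sound, but the two load-bearing inputs --- the anabelian injectivity and the rigidity of $\hat{\Delta}$ --- are missing, so the proposal has a genuine gap rather than an alternative route.
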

This is \cite[Theorem~29]{GonzalezDiez2013}. The inequality (\ref{HyperbolicInequality}) is easily seen to be strictly necessary. If it is not satisfied the curves $X$ obtained by such dessins are $\bbp^1_{\qbar }$ and the elliptic curves of $j$-invariants $0$ and $1728$, and the regular dessins these support are quite few.

\section{Hurwitz curves}

\noindent By a well-known theorem of Hurwitz~\cite{MR1510753} a (smooth projective) curve of genus $g\ge 2$ over $\bbc$ has no more than $84(g-1)$ automorphisms. Curves which attain this bound are called \emph{Hurwitz curves}.  The following is a simple consequence of Theorem~\ref{GDJZ}:
\begin{Corollary}\label{TheoremA}
The absolute Galois group $\operatorname{Gal}(\qbar /\bbq )$ operates faithfully on the set of isomorphism classes of Hurwitz curves.
\end{Corollary}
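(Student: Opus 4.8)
The plan is to deduce the corollary directly from Theorem~\ref{GDJZ} by identifying Hurwitz curves with a particularly rigid class of regular dessins. Recall the classical fact underlying Hurwitz's bound: a smooth projective curve $X$ of genus $g\ge 2$ over $\qbar$ attains $|\Aut(X)|=84(g-1)$ if and only if $\Aut(X)$ is a \emph{Hurwitz group}, i.e.\ a nontrivial quotient of the $(2,3,7)$ triangle group, which happens precisely when the quotient map $X\to X/\Aut(X)\cong\bbp^1_{\qbar}$ is branched over exactly three points with ramification orders $2$, $3$ and $7$; indeed Riemann--Hurwitz then forces $2g-2=|\Aut(X)|\cdot\frac{1}{42}$. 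Since $\frac12+\frac13+\frac17=\frac{41}{42}<1$, the triple $(2,3,7)$ satisfies the hyperbolic inequality~(\ref{HyperbolicInequality}), so Theorem~\ref{GDJZ} applies to regular dessins of type $(2,3,7)$.

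First I would set up a Galois-equivariant bijection between isomorphism classes of Hurwitz curves and isomorphism classes of regular dessins whose ramification orders are \emph{exactly} $(2,3,7)$. Given a Hurwitz curve $X$, compose the canonical quotient $X\to\bbp^1_{\qbar}$ with the unique M\"{o}bius transformation carrying the order-$2$, order-$3$ and order-$7$ branch points to $0$, $1$ and $\infty$; because these orders are pairwise distinct, this Bely\u{\i} map $\beta$ is canonically attached to $X$, so that an isomorphism of curves is the same datum as an isomorphism of the associated dessins. As $0,1,\infty\in\bbp^1(\bbq)$ are fixed by $\Gal(\qbar/\bbq)$, the normalization commutes with the Galois action, giving the desired equivariance.

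The key observation is then that the only regular dessins of type $(2,3,7)$ are the Hurwitz curves together with the trivial (genus $0$) dessin. For a regular dessin the monodromy is a triple $(g_0,g_1,g_\infty)$ with $g_0g_1g_\infty=1$ generating the deck group $G$, and type $(2,3,7)$ means the orders of $g_0,g_1,g_\infty$ divide $2,3,7$ respectively. If any one of these elements is trivial, the relation expresses it as the inverse of another, forcing two of the orders to coincide; but the pairwise greatest common divisors of $2,3,7$ are all $1$, so $G$ collapses to the trivial group. Hence the only possibilities are the trivial dessin and the case where the orders are exactly $2,3,7$, namely the Hurwitz curves. The trivial dessin is defined over $\bbq$ and thus fixed by $\Gal(\qbar/\bbq)$.

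Combining these steps, the action of $\Gal(\qbar/\bbq)$ on regular dessins of type $(2,3,7)$ differs from its action on Hurwitz curves only by the single fixed point coming from the trivial dessin, so faithfulness of the former (Theorem~\ref{GDJZ}) yields faithfulness of the latter. I expect the main point requiring care to be the equivariance and bijectivity of the dessin correspondence in the second step---verifying that the canonical Bely\u{\i} structure really is intrinsic to the curve and compatible with conjugation---while the collapse of the degenerate types and the final appeal to Theorem~\ref{GDJZ} are routine.
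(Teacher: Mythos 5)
Your proof is correct and follows essentially the same route as the paper: both identify Hurwitz curves with regular dessins of type $(2,3,7)$ via the quotient by the automorphism group and then invoke Theorem~\ref{GDJZ}. Your write-up is in fact slightly more careful than the paper's, which asserts a one-to-one correspondence outright and silently passes over both the trivial dessin (the identity map $\bbp^1_{\qbar }\to\bbp^1_{\qbar }$, whose ramification orders also divide $2,3,7$) and the Galois-equivariance of the normalised Bely\u{\i} map, points that you verify explicitly.
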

This is to be understood as follows: every Hurwitz curve has a unique model over $\qbar$, and conjugating it by an automorphism of $\qbar$ will yield another, possibly different, Hurwitz curve.
\begin{proof}
If $X$ is a Hurwitz curve, then $X/\Aut X$ is a curve of genus zero, and the projection map $X\to X/\Aut X$ is unramified outside of three points in the target, where it has ramification orders $2$, $3$, and $7$, respectively. Conversely, if $\beta\colon X\to\bbp^1_{\qbar }$ is a Galois Bely\u{\i} map of type $(2,3,7)$, then $X$ is a Hurwitz curve and $\beta$ is the orbit projection for $\Aut X$. For the proof of these classical facts, see \cite{MR1722414}.

Hurwitz curves are therefore in one-to-one correspondence to regular dessins of type $(2,3,7)$, and Theorem~\ref{GDJZ} applies.
\end{proof}
Hurwitz curves can also be described as follows: they are precisely the complex curves that can be written as $\Gamma\backslash\mathfrak{H}$ where $\mathfrak{H}$ is the upper half plane and $\Gamma$ a normal subgroup of finite index in the triangle group $\Delta$ of signature $(2,3,7)$. Since the latter is finitely generated, there are only finitely many such normal subgroups of a given index. As the index is related to the genus of the corresponding curve by the formula
$$2-2g(\Gamma\backslash\mathfrak{H})=\chi (\Gamma\backslash\mathfrak{H} )=-\frac{(\Delta : \Gamma )}{42},$$
we see that there are only finitely many Hurwitz curves up to isomorphism in each genus. Let $\mathcal{H}_g$ be the finite set of isomorphism classes of Hurwitz curves in genus $g$.
\begin{Corollary}
The function $h\colon\bbn\to\bbn$ which maps $g\in\bbn$ to the cardinality of $\mathcal{H}_g$ is unbounded.
\end{Corollary}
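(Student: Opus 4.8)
The plan is to combine the faithfulness from Corollary~\ref{TheoremA} with the finiteness of the sets $\mathcal{H}_g$ and the fact that $\Gal(\qbar/\bbq)$ is far from having finite exponent. First I would note that Galois conjugation preserves the genus: for $\sigma\in\Aut(\qbar)$ and a Hurwitz curve $X$, the conjugate $X^{\sigma}$ arises by base change along $\sigma$, so it has the same geometric genus as $X$ (equivalently, in the dessin description conjugation leaves the degree $(\Delta:\Gamma)$ unchanged, hence the genus by the Euler characteristic formula above). Therefore the action of $G\defined\Gal(\qbar/\bbq)$ on the set of all Hurwitz curves respects the partition into the finite sets $\mathcal{H}_g$, and Corollary~\ref{TheoremA} is precisely the injectivity of the resulting homomorphism
$$\rho\colon G\longrightarrow \prod_{g\ge 2}\operatorname{Sym}(\mathcal{H}_g).$$

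Next I would argue by contradiction, assuming $h(g)=\#\mathcal{H}_g\le N$ for all $g$ and a fixed $N$. Choosing for each $g$ an injection $\mathcal{H}_g\hookrightarrow\{1,\dots,N\}$ embeds each $\operatorname{Sym}(\mathcal{H}_g)$ into the symmetric group $S_N$, turning $\rho$ into an injective homomorphism $G\hookrightarrow (S_N)^{\bbn}$. Every element of $S_N$ has order dividing $e_N\defined\operatorname{lcm}(1,2,\dots,N)$, so the same is true in the product $(S_N)^{\bbn}$; as an injective homomorphism preserves the orders of elements, this would force every element of $G$ to have order dividing $e_N$, that is, $G$ would have finite exponent.

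The contradiction comes from the arithmetic of the absolute Galois group, which has elements of infinite order. To exhibit one I would invoke the cyclotomic character, a surjection $G\twoheadrightarrow\Gal(\bbq(\mu_\infty)/\bbq)\cong\hat{\bbz}^{\times}=\prod_p\bbz_p^{\times}$; for any odd prime $p$ the factor $\bbz_p^{\times}$ contains a copy of the additive group $\bbz_p$, which has elements of infinite order, so $\hat{\bbz}^{\times}$ does as well, and any preimage in $G$ of such an element has infinite order too. This contradicts the finite exponent forced above, so no bound $N$ can exist and $h$ is unbounded.

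Thus the only genuinely external input is Corollary~\ref{TheoremA} (hence Theorem~\ref{GDJZ}); the two remaining ingredients --- invariance of the genus under conjugation and the existence of an element of infinite order in $\Gal(\qbar/\bbq)$ --- are elementary. I expect the sole point deserving a line of care to be the identification, in the first step, of the faithful action on all Hurwitz curves with the product over genera; everything afterwards is formal. If one prefers to avoid infinite-order elements, the same contradiction follows from the cyclic quotients $\Gal(\bbq(\zeta_\ell)/\bbq)\cong\bbz/(\ell-1)\bbz$, whose exponents $\ell-1$ are unbounded as $\ell$ ranges over the primes and hence cannot all divide $e_N$.
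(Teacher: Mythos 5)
Your proposal is correct and follows essentially the same route as the paper: assume a uniform bound $N$, deduce that the faithful action of $\Gal(\qbar/\bbq)$ on all Hurwitz curves factors through a group of finite exponent (the paper uses $N!$ via the finite quotients acting on each $\mathcal{H}_g$, you use $\operatorname{lcm}(1,\dots,N)$ via an embedding into $(S_N)^{\bbn}$), and contradict this with an element of infinite order. The only difference is that you supply details the paper leaves implicit --- that conjugation preserves the genus, and that infinite-order elements exist (via the cyclotomic character) --- which is a welcome but inessential refinement.
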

\begin{proof}
Assume it were bounded, say $\lvert\mathcal{H}_g\rvert\le N$ for all $g$. Let $G_g$ be the finite quotient of $\Gal (\qbar /\bbq )$ that operates faithfully on $\mathcal{H}_g$; by assumption, $G_g$ is isomorphic to a subgroup of the symmetric group $\mathfrak{S}_N$ and hence has order dividing $N!$.

Let $\sigma\in\Gal (\qbar /\bbq )$ be of infinite order. Then $\sigma^{N!}$ is still of infinite order, but it projects to the identity in all $G_g$, hence operates trivially on all Hurwitz curves. Contradiction!
\end{proof}
These observations should be compared with the relative rarity of Hurwitz curves. For instance, the Dirichlet series
$$\sum_{g=1}^{\infty }\frac{h(g)}{g^s}=\sum_{\substack{\text{$X$ Hurwitz curve}\\ \text{up to isomorphism}}} \frac{1}{g(X)^s}$$
converges precisely for $\Re (s)>\frac{1}{3}$, see \cite{MR1882031}, which may be intepreted as saying that the genus of the $n$-th Hurwitz curve is of the order of magnitude $n^3$, in a very rough sense. In particular, $h(g)=0$ for almost all $g$.

Furthermore, Conder computed that there are only $92$ Hurwitz curves of genus less than one million, with only $32$ different genera occurring, see~\cite{MR887205}. From \cite{MR887205} we read that the only $g\le 100$ occurring are 3, 7, 14 and 17, and the tables in \cite{MR3010126} tell us about their behaviour under $\Gal (\qbar /\bbq )$:
\begin{enumerate}
\item The only Hurwitz curve in genus three is \emph{Klein's quartic curve} with homogeneous equation $x^3y+y^3z+z^3x=0$, hence fixed by $\Gal (\qbar /\bbq )$.
\item The only Hurwitz curve in genus seven is the \emph{Fricke--Macbeath curve}, discovered by Fricke~\cite{MR1511059} and rediscovered by Macbeath~\cite{MR0177342}. With homogeneous coordinates $x_1$, $x_2$, $x_3$, $y$, $z$ on $\bbp^4$ it is the curve $X\subset\bbp^4$ defined by the equations
$$x_j^2z^2=(y-\zeta_7^jz)(y-\zeta_7^{j+2}z)(y-\zeta_7^{j+3}z)(y-\zeta_7^{j+4}z)$$
for $j=1,2,3$, where $\zeta_7$ is a primitive seventh root of unity. Since it is the only Hurwitz curve in genus seven, it must be fixed by the entire group $\Gal (\qbar /\bbq )$. Indeed, in \cite{Hidalgo2012} an extremely complicated model over $\bbq$ was found.
\item In genus fourteen there are three Hurwitz curves known as the \emph{first Hurwitz triplet}. They are defined over $k=\bbq (\cos\frac{2\pi}{7})$ and permuted simply transitively by $\Gal (k/\bbq )$.
\item Finally, in genus seventeen there are two Hurwitz curves, defined over $\bbq (\sqrt{-3})$ and exchanged by this field's nontrivial automorphism.
\end{enumerate}

\section{Congruence and noncongruence Hurwitz curves}

\noindent Yet another conclusion of Corollary~\ref{TheoremA} is that noncongruence subgroups of the triangle group $\Delta$ are abundant. Recall that $\Delta$ is in fact an arithmetic Fuchsian group, which can be described as follows. Let $k=\bbq (\cos\frac{2\pi }{7})$, considered as a subfield of $\bbr$, and let $B$ be the quaternion algebra over $k$ which is unramified over each finite prime of $k$ and the identity embedding $k\to\bbr$ and ramified over the two non-identity embeddings $k\to\bbr$. The identity embedding $k\to\bbr$ then extends to an embedding $B\hookrightarrow\mathrm{M}_2(\bbr )$, unique up to $\GL_2(\bbr )$-conjugation. There is a unique maximal order $\mathcal{O}\subset B$ up to $B^{\times }$-conjugation, and under the embedding $B\hookrightarrow\mathrm{M}_2(\bbr )$ the group $\mathcal{O}^1$ of units of norm one in $\mathcal{O}$ is mapped to a lattice $\mathcal{O}^1\subset\SL_2(\bbr )$. Its image in $\PSL_2(\bbr )$ is then precisely $\Delta$.

Let us say that a Hurwitz curve $X$ is a \emph{congruence Hurwitz curve} if the corresponding subgroup of $\Delta$ is a congruence subgroup. Examples of congruence Hurwitz curves are the \emph{Hurwitz--Macbeath curves} obtained from principal congruence subgroups $\Delta (\mathfrak{p})$ for prime ideals $\mathfrak{p}$ of $k$, see \cite{MR2306150, MR0262379}. Most of the small genus examples given above are of this type: (i) corresponds to $\mathfrak{p}\mid 7$, (ii) to $\mathfrak{p}=2$, (iii) to $\mathfrak{p}\mid 13$. The groups corresponding to (iv), however, are noncongruence subgroups. They are contained in $\Delta (\mathfrak{p})$ with $\mathfrak{p}\mid 7$, with the intermediate quotient isomorphic to $(\bbz /2\bbz )^3$. It is easy to see that a group extension of $\Delta /\Delta (\mathfrak{p})\simeq\PSL (2,\bbz /7\bbz )$ by $(\bbz /2\bbz )^3$ cannot be a quotient of $\PSL (2,\mathfrak{o}_k/\mathfrak{n})$ for any ideal $\mathfrak{n}$ of $\mathfrak{o}_k$, as it would have to be (by \cite{MR2306150}) if the subgroup we are considering were a congruence subgroup.

From Shimura's theory of canonical models for quotients of the upper half plane by congruence groups \cite{MR0204426} we learn:
\begin{Proposition}\label{ActionOnCongruenceRatherSmall}
The action of $\Gal (\qbar /\bbq )$ on congruence Hurwitz curves factors through $\Gal (k^{\mathrm{ab}}/\bbq )$, where $k^{\mathrm{ab}}$ is the maximal abelian extension of $k=\bbq (\cos \frac{2\pi }{7})$. In particular it is not faithful.
\end{Proposition}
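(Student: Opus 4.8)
The plan is to realise the congruence Hurwitz curves as geometrically connected components of the quaternionic Shimura curves attached to the group $G=\mathrm{Res}_{k/\bbq }B^{\times }$ and to read off their fields of definition from Shimura's theory of canonical models. First I would recall the shape of the Shimura datum $(G,X)$: since $B$ is split at exactly one of the three real places of $k$ (the one induced by the identity embedding $k\hookrightarrow\bbr$) and ramified at the other two, the symmetric space $X$ of $G(\bbr )$ is a single copy of $\mathfrak{H}$, and for a suitable open compact subgroup $K\subset G(\bba_f)$ the congruence quotient $\Gamma\backslash\mathfrak{H}$ appears as one of the finitely many geometric connected components of the Shimura curve $\mathrm{Sh}_K(G,X)$.

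The crucial computation is that of the reflex field. Over $\bbc$ the group $G$ splits as a product $\prod_{v}\GL_2$ indexed by the three embeddings $v\colon k\hookrightarrow\bbc$, and the Hodge cocharacter $\mu$ is the standard minuscule one in the single factor corresponding to the distinguished embedding and trivial elsewhere. Its conjugacy class is therefore stabilised in $\Gal (\qbar /\bbq )$ exactly by the subgroup fixing that embedding, so the reflex field is the image of $k$ under the identity embedding, namely $k=\bbq (\cos\frac{2\pi }{7})$ itself as a subfield of $\qbar $. Next I would invoke the two inputs from the theory of canonical models. Shimura's main theorem gives $\mathrm{Sh}_K(G,X)$ a canonical model over this reflex field $k$; and the standard description of connected components via the reciprocity law (as in Deligne's account) shows that $\Gal (\qbar /k)$ permutes the geometric components of $\mathrm{Sh}_K(G,X)$ through a finite abelian quotient, the permutation being given by Artin reciprocity. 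Consequently the set $\pi_0$ of components, and each individual component in it, is defined over a finite abelian extension of $k$, hence over a subfield of $k^{\mathrm{ab}}$. In particular the Hurwitz curve $\Gamma\backslash\mathfrak{H}$ admits a model over a finite subextension of $k^{\mathrm{ab}}/k$.

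It then follows formally that $\Gal (\qbar /k^{\mathrm{ab}})$ fixes the isomorphism class of every congruence Hurwitz curve: base change along $\sigma\in\Gal (\qbar /k^{\mathrm{ab}})$ does not alter a model already defined over $k^{\mathrm{ab}}$. Since $k/\bbq$ is Galois (it is the cubic real subfield of $\bbq (\zeta_7)$), the field $k^{\mathrm{ab}}$ is Galois over $\bbq$ and $\Gal (k^{\mathrm{ab}}/\bbq )$ is a genuine quotient of $\Gal (\qbar /\bbq )$; the action on congruence Hurwitz curves therefore factors through it. Faithfulness fails because $k^{\mathrm{ab}}\subsetneq\qbar$, so the kernel $\Gal (\qbar /k^{\mathrm{ab}})$ is nontrivial. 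The main obstacle is the bookkeeping of the preceding two paragraphs, namely matching the projective picture of $\Delta\subset\PSL_2(\bbr )$ and its congruence subgroups with a clean Shimura datum for which the reflex field is visibly $k$ and the connected-component reciprocity law applies verbatim; once the datum is fixed, the field-of-definition statement is a direct citation of \cite{MR0204426}.
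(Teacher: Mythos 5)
Your proposal is correct and takes essentially the same route as the paper: the paper's entire proof is the one-sentence observation that, by Shimura's theory of canonical models \cite{MR0204426}, every congruence Hurwitz curve has a canonical model over a class field (hence an abelian extension) of $k$, which is exactly the field-of-definition statement you derive via the reflex-field and connected-component reciprocity computations before making the same formal deduction that the action then factors through $\Gal (k^{\mathrm{ab}}/\bbq )$. The adelic bookkeeping you flag at the end --- matching congruence subgroups of $\mathcal{O}^1/\{\pm 1\}$ with geometric components of $\mathrm{Sh}_K(G,X)$ --- is precisely the content that the paper's bare citation of Shimura absorbs, so your write-up is a legitimate (and more detailed) expansion of the paper's argument rather than a different one.
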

\begin{proof}
Every congruence Hurwitz curve has a canonical model over a certain class field of $k$.
\end{proof}
The author is skeptical whether the induced action of $\Gal (k^{\mathrm{ab}} /\bbq )$ is faithful; he suspects that it is not, and that the action may even factor through $\Gal (k/\bbq )$, since the field of definition of the canonical model is usually much larger than the field of moduli. For instance, combining \cite{MR2306150} and \cite{MR0262379} one sees that $\Gal (\qbar /\bbq )$ acts on the Hurwitz curves corresponding to the subgroups $\Delta (\mathfrak{p})$ with prime $\mathfrak{p}$ by permuting the primes in the obvious way, in particular the field of moduli of such a curve is the splitting field in $k/\bbq$ of the rational prime below $\mathfrak{p}$.

\section{Profinite completion and Galois action on torsion}

\noindent The Galois action on Hurwitz curves can also be described in a more group-theoretical manner. Let $\hat{\Delta }$ be the profinite completion of $\Delta$. Then open normal subgroups of $\hat{\Delta }$ correspond to normal subgroups of finite index in $\Delta$ and hence to isomorphism classes of Hurwitz curves. There is a continuous exterior action of $\Gal (\qbar /\bbq )$ on $\hat{\Delta }$, i.e.\ a continuous homomorphism
\begin{equation*}
\varrho_{\Delta }\colon \Gal (\qbar /\bbq )\to\Out \hat{\Delta }\defined\Aut\hat{\Delta }/\Inn\hat{\Delta }
\end{equation*}
where $\Inn\hat{\Delta }$ denotes the group of inner automorphisms. This action is constructed explicitly and described in~\cite{GonzalezDiez2013} for general triangle groups. The Galois action on open normal subgroups of $\hat{\Delta }$ it induces is precisely the Galois action on Hurwitz curves discussed before.

One way to construct $\varrho_{\Delta }$ is by a stack version of Grothendieck's theory of the fundamental group: There exists a Deligne--Mumford stack $\mathcal{X}$ over $\bbq$ whose coarse moduli space is $\bbp^1$ and which has trivial generic stabilisers, and cyclic stabilisers of orders 2, 3 and 7 at $0$, $1$ and $\infty$, respectively. The analytification of $\mathcal{X}_{\bbc }$ is then the orbifold quotient $[\Delta\backslash\mathfrak{H}]$, and hence we can identify the \'{e}tale fundamental group of $\mathcal{X}_{\qbar }$ with $\hat{\Delta }$. The usual short exact ``homotopy'' sequence for \'{e}tale fundamental groups \cite[IX.6.1]{MR0354651} exists also in this case:
\begin{equation}
1\to\pioneet (\mathcal{X}_{\qbar },\ast )\to\pioneet (\mathcal{X},\ast )\to\Gal (\qbar /\bbq )\to 1.
\end{equation}
The homomorphism $\Gal (\qbar /\bbq )\to\Out\pioneet (\mathcal{X}_{\qbar },\ast )$ defined by this sequence is $\varrho_{\Delta }$.

Theorem~\ref{GDJZ} clearly implies that $\varrho_{\Delta }$ is injective. The proof of Theorem~\ref{GDJZ} in~\cite{GonzalezDiez2013}, however, goes in the opposite direction and uses this very injectivity (for general triangle groups), which follows from results of Hoshi and Mochizuki~\cite{MR2895284} in anabelian geometry.

For the next observation, we use that every Hurwitz curve admits a model over its field of moduli, which follows from~\cite{MR0453746}.

\begin{Corollary}\label{TheoremCprime}
Fix an element $\sigma\in \Gal (\qbar /\bbq )$ other than the identity. Then there exists a Hurwitz curve $Y$ with moduli field $\bbq$ such that for any model $Y_0$ of $Y$ over $\bbq$ and for every odd prime $\ell$, the image of $\sigma$ under the representation
$$\varrho_{Y_0,\ell}\colon \Gal(\qbar /\bbq )\to\GL (2g,\bbf_{\ell })$$
is not the identity.
\end{Corollary}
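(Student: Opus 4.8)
The plan is to combine the faithfulness of the Galois action on Hurwitz curves (Corollary~\ref{TheoremA}) with a rigidity property of automorphisms on torsion: for an \emph{odd} prime $\ell$, an automorphism of a curve of genus $\ge 2$ that acts trivially on the $\ell$-torsion of its Jacobian is the identity. The strategy is to manufacture, from a Hurwitz curve that $\sigma$ genuinely moves, a single Galois-stable Hurwitz curve $Y/\bbq$ on which the whole automorphism group acts, and to show that triviality of $\sigma$ on $\Jac(Y_0)[\ell]$ would force $\sigma$ to act trivially on $\Aut(Y_{\qbar})$ as well, contradicting the choice of the original curve.

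First I would construct $Y$. As $\sigma\ne\id$, Corollary~\ref{TheoremA} furnishes a Hurwitz curve $X$, corresponding to an open normal subgroup $N\trianglelefteq\hat{\Delta}$, with $\sigma X\not\cong X$, i.e.\ $N':=\varrho_{\Delta}(\sigma)(N)\ne N$. I set
\[
M=\bigcap_{\tau\in\Gal(\qbar/\bbq)}\varrho_{\Delta}(\tau)(N),
\]
a finite intersection of conjugates since there are only finitely many subgroups of the given index. Then $M$ is open, normal, torsion-free (being an intersection of groups corresponding to Hurwitz curves), and stable under the entire Galois action, so the corresponding Hurwitz curve $Y$ has field of moduli $\bbq$ and, by the quoted consequence of~\cite{MR0453746}, admits a model $Y_0$ over $\bbq$. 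Writing $G:=\Aut(Y_{\qbar})=\hat{\Delta}/M$ (a finite group), the inclusion $M\subseteq N$ exhibits $X$ as the quotient $Y_{\qbar}/\overline{N}$ by the normal subgroup $\overline{N}:=N/M\trianglelefteq G$, and $\varrho_{\Delta}(\sigma)$ descends to an automorphism $\alpha$ of $G$ carrying $\overline{N}$ to $\overline{N}':=N'/M\ne\overline{N}$.

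The contradiction is then driven by two ingredients. The rigidity ingredient: for $\ell\ge 3$ the action of $G$ on $\Jac(Y)[\ell]$ (equivalently on $H^1(Y_{\qbar},\bbf_{\ell})$) is \emph{faithful}, since $\Aut(Y)\hookrightarrow\Aut(\Jac Y)\hookrightarrow\GL_{2g}(\bbz)$ for $g\ge 2$ and a finite-order integral matrix congruent to the identity modulo an odd $\ell$ is the identity (Minkowski--Serre). The equivariance ingredient: for any $\bbq$-model $Y_0$, the group $\Gal(\qbar/\bbq)$ acts $\bbf_{\ell}$-linearly on $V:=H^1(Y_{\qbar},\bbf_{\ell})$ and semilinearly over $G$, so $\sigma(g\cdot v)=\alpha_0(g)\cdot\sigma(v)$, where $\alpha_0\in\Aut(G)$ is the automorphism through which $\sigma$ acts on $\Aut(Y_{\qbar})$; changing the model alters $\alpha_0$ only by an inner automorphism, so its outer class is always that of $\alpha$ and never fixes $\overline{N}$. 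Now if $\varrho_{Y_0,\ell}(\sigma)=\id$ for some odd $\ell$, then $g\cdot v=\alpha_0(g)\cdot v$ for all $g,v$, so $\alpha_0(g)g^{-1}$ acts trivially on $V$; faithfulness forces $\alpha_0(g)=g$, hence $\alpha_0=\id_G$, whose outer class fixes $\overline{N}$ — contradicting $\alpha(\overline{N})=\overline{N}'\ne\overline{N}$ (equivalently $\sigma X\cong X$). Since $\ell$ and the model were arbitrary, the corollary follows.

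I expect the main obstacle to be the equivariance ingredient: verifying that the Galois action on $\Aut(Y_{\qbar})$ is exactly the reduction of $\varrho_{\Delta}$ modulo $M$, that the semilinear compatibility holds with the correct twist, and that the relevant outer class is genuinely independent of the chosen $\bbq$-model. The rigidity step, by contrast, is a clean application of the Minkowski--Serre lemma, and it is precisely there that the hypothesis that $\ell$ is odd is indispensable.
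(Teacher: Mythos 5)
Your proof is correct, and it shares the paper's skeleton: a Galois-stable open normal subgroup obtained by intersecting the finitely many Galois conjugates of a given one, a $\bbq$-model via Fried's theorem, the induced splitting of $1\to\Aut_{\qbar }Y\to\Aut_{\bbq }Y\to\Gal (\qbar /\bbq )\to 1$ giving the semilinear compatibility on $\mathrm{H}^1(Y,\bbf_{\ell })$, and rigidity of the $\Aut_{\qbar }Y$-action on odd $\ell$-torsion (the paper cites Serre's rigidity theorem; your Minkowski--Serre argument is the same content). Where you genuinely diverge is in certifying that $\sigma$ acts non-innerly on the finite group $G$: the paper starts from injectivity of $\varrho_{\Delta }\colon\Gal (\qbar /\bbq )\to\Out\hat{\Delta }$ (the Hoshi--Mochizuki input) and needs a compactness/inverse-limit argument to push a nontrivial outer class down to some stable finite quotient, whereas you start from Corollary~\ref{TheoremA} and use the moved normal subgroup $\overline{N}=N/M$ as the witness of non-innerness. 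Your witness is invariant under inner automorphisms, so non-innerness at finite level is immediate, the compactness step disappears, and independence of the chosen $\bbq$-model becomes transparent; as a further small bonus, $M\subseteq N$ automatically corresponds to a Hurwitz curve (torsion-freeness), a point the paper glosses over for its stable subgroup. The trade-off is purely one of packaging: Corollary~\ref{TheoremA} rests on Theorem~\ref{GDJZ}, whose proof uses that very injectivity, so the deep input is identical in both routes. Finally, the equivariance you flag as the main obstacle --- that conjugation by $s(\sigma )$ on $\Aut_{\qbar }Y$ lies in the outer class of $\varrho_{\Delta }(\sigma )$ reduced modulo $M$ --- is indeed the one unproved compatibility, but the paper's proof leans on it just as implicitly (it is exactly what justifies the assertion that otherwise ``$\sigma$ would act trivially, in particular as an inner automorphism, on $H$''), so this is not a gap relative to the paper's own standard of rigour.
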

Here $\varrho_{Y_0,\ell}$ is the usual Galois representation on the $\ell$-torsion of the Jacobian, $(\Jac Y)[\ell ]\cong\bbf_{\ell }^{2g}$.

\begin{proof}[Proof of Corollary~\ref{TheoremCprime}]
Every open normal subgroup $N$ of $\hat{\Delta }$ contains one which is stable under $\Gal (\qbar /\bbq )$: the setwise stabiliser of $N$ in $\Gal (\qbar /\bbq )$ has finite index in $\Gal (\qbar /\bbq )$, therefore
$$\tilde{N}=\bigcap_{\sigma\in \Gal (\qbar /\bbq )}\sigma (N)$$
is an open normal subgroup of $\hat{\Delta }$ contained in $N$. This means that $\hat{\Delta }$ can also be described as the projective limit of all $\hat{\Delta }/N$ with $N$ open, normal and stable under $\Gal (\qbar /\bbq )$. We conclude (using the compactness of $\hat{\Delta }$) that $\sigma$ operates by a non-trivial outer automorphism on some such $\hat{\Delta }/N$. Now $N$ corresponds to a Hurwitz curve $Y$ with moduli field $\bbq$; we claim that $Y$ has the desired properties.

The Hurwitz group $H=\hat{\Delta }/N=\Aut_{\qbar }Y$ sits in a short exact sequence:
\begin{equation}
1\to\Aut_{\qbar }Y\to\Aut_{\bbq }Y\to\Gal (\qbar /\bbq )\to 1.
\end{equation}
Here the middle term means the group of all automorphisms of $Y$ as a $\bbq$-scheme (or, which amounts to the same, as a scheme without any further structure). A choice of a model $Y_0$ over $\bbq$ yields a splitting $s$ of this sequence.

Now $\Aut_{\bbq }Y$ acts naturally on the \'{e}tale cohomology group $\mathrm{H}^1(Y,\bbf_{\ell })$; by~\cite{Serre60} the subgroup $H=\Aut_{\qbar }Y$ operates faithfully on this cohomology group. There exists some $h\in H$ such that $s(\sigma )hs(\sigma )^{-1}\neq h$, for otherwise $\sigma$ would act trivially, in particular as an inner automorphism, on $H$. Hence these two elements also operate differently on $\mathrm{H}^1(Y,\bbf_{\ell })$. But this means that $s(\sigma )$ has to operate nontrivially on this cohomology group. Finally, the $\ell$-torsion points of the Jacobian are canonically identified with the dual of $\mathrm{H}^1(Y,\bbf_{\ell })$, so $\sigma$ also operates nontrivially there.
\end{proof}

\section{Hurwitz origamis}

\noindent Instead of searching for algebraic curves with large automorphism group compared to the genus one might do the same for translation surfaces. A translation surface is a closed Riemann surface with a nonzero holomorphic one-form; for more geometric descriptions, see~\cite{MR2186246}. Such a search has been initiated in~\cite{SchmithuesenSchlagePuchta}. There it is shown that a translation surface of genus $g\ge 2$ has at most $4(g-1)$ automorphisms, and surfaces achieving this bound are named Hurwitz translation surfaces. Since they belong to a particularly nice class of translation surfaces called origamis, we suggest to use the more colourful term \emph{Hurwitz origamis} instead. They are more common than Hurwitz curves; for example, a Hurwitz origami exists in genus $g$ if and only if $g\equiv 1,3,4,5\bmod 6$, see \cite[Theorem~2]{SchmithuesenSchlagePuchta}.

An origami is a finite ramified covering of a square torus $\bbc /\bbz [\mathrm{i}]$, unramified outside the origin. Four the arithmetic point of view taken here is it sensible to rephrase this: an origami is a smooth projective complex curve $X$ together with a nonconstant rational map $p\colon X\to E$, where $E$ is the complex elliptic curve with $j(E)=1728$, which is unramified outside $0\in E(\bbc )$. Just as for Bely\u{\i} pairs, the curve $X$ and the map $p$ descend uniquely to objects over $\qbar$. We then have a Galois action on pairs $(X,p)$ as soon as we fix a model of $E$ over $\bbq$ (for instance, $y^2=x^4-1$). The actions corresponding to two different models will still agree on an open normal subgroup of $\Gal (\qbar /\bbq )$.

Hurwitz origamis are then precisely those pairs $(X,p)$ where $p$ is a normal covering that has ramification order two at all ramification points --- this is a reformulation of \cite[Theorem~1]{SchmithuesenSchlagePuchta}. In particular, being a Hurwitz origami is stable under the Galois action. The following observation then holds regardless of the model of $E$ chosen:
\begin{Theorem}\label{TheoremC}
The absolute Galois group operates faithfully on the set of isomorphism classes of Hurwitz origamis.
\end{Theorem}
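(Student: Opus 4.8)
The plan is to imitate the proof of Corollary~\ref{TheoremA}, replacing the triangle group by the orbifold fundamental group of the base and transporting Theorem~\ref{GDJZ} through a finite covering. First I would recast the statement group-theoretically, in parallel with Section~4. Let $\Gamma=\langle a,b\mid [a,b]^2\rangle$ be the cocompact Fuchsian group of signature $(1;2)$, the orbifold fundamental group of $E$ (the curve with $j=1728$) carrying a single cone point of order $2$ at the origin. There is a Deligne--Mumford stack $\mathcal{E}$ over $\bbq$ with coarse space $E$ and an order-$2$ stabiliser at $0$, whose geometric \'{e}tale fundamental group is $\hat{\Gamma}$, and the homotopy sequence yields $\varrho_{\Gamma}\colon\Gal(\qbar/\bbq)\to\Out\hat{\Gamma}$. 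Unwinding the definitions, isomorphism classes of Hurwitz origamis correspond exactly to those open normal subgroups $N\trianglelefteq\hat{\Gamma}$ for which $[a,b]$ has order exactly $2$ modulo $N$ (the cover being genuinely ramified, of ramification order $2$), and the Galois action on them is induced by $\varrho_{\Gamma}$. Thus the theorem is equivalent to the assertion that every $\sigma\neq 1$ moves some such subgroup $N$.

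The geometric input is that $\Gamma$ embeds in a hyperbolic triangle group. The automorphism $z\mapsto iz$ of $E$ generates $\mu_4=\Aut(E,0)$, and the quotient orbifold $\mathcal{E}/\mu_4$ has underlying space $\bbp^1$ with cone points of orders $8,4,2$, the order-$2$ cone point of $\mathcal{E}$ lying over the order-$8$ point since $2\cdot 4=8$. Hence $\Gamma$ is normal of index $4$ in the $(2,4,8)$-triangle group $\Delta'$, with $\Delta'/\Gamma\cong\mu_4$, and $\tfrac12+\tfrac14+\tfrac18<1$, so Theorem~\ref{GDJZ} applies to $\Delta'$. Because $\mu_4$ is defined only over $\bbq(i)$, this covering and the resulting compatibility $\hat{\Gamma}\trianglelefteq\hat{\Delta}'$ of outer Galois actions are a priori defined only over $\bbq(i)$. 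I would therefore first prove faithfulness for $\Gal(\qbar/\bbq(i))$ and then descend: the kernel $K$ of the $\Gal(\qbar/\bbq)$-action is normal and meets $\Gal(\qbar/\bbq(i))$ trivially, hence has order at most $2$; but $\Gal(\qbar/\bbq)$ has no nontrivial finite normal subgroup (its only torsion being the non-normal involutions of Artin--Schreier), so $K=1$ for free.

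For $\Gal(\qbar/\bbq(i))$ the heart is a transfer of faithfulness across the index-$4$ inclusion. The clean half is injectivity of $\varrho_{\Gamma}$: since $\hat{\Delta}'$ is slim (every open subgroup has trivial centraliser), an automorphism of $\hat{\Delta}'$ preserving the Galois-stable subgroup $\hat{\Gamma}$ and inner on $\hat{\Gamma}$ is already inner on $\hat{\Delta}'$; combined with the injectivity of $\varrho_{\Delta'}$ furnished by Theorem~\ref{GDJZ}, this gives $\varrho_{\Gamma}(\sigma)\neq 1$ for every $\sigma\neq 1$. To convert this into the movement of a \emph{ramified normal} subgroup I would use the dictionary: a regular $(2,4,8)$-dessin $M\trianglelefteq\hat{\Delta}'$ with genuine order-$8$ ramification (equivalently $M\subseteq\hat{\Gamma}$, so that its group surjects onto $\mu_4$) is literally the same subgroup as the Hurwitz origami it defines, and for such $M$ the notions ``moved as a dessin'' and ``moved as an origami'' coincide. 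A direct computation in $\hat{\Delta}'/(M\cap\hat{\Gamma})$ shows that whenever the origami group $\hat{\Gamma}/(M\cap\hat{\Gamma})$ is centre-free, any two normal lifts of the $\mu_4$-quotient coincide, so that $\sigma$ moves $M$ if and only if it moves $M\cap\hat{\Gamma}$.

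The main obstacle is precisely to produce, for each $\sigma\neq 1$ in $\Gal(\qbar/\bbq(i))$, a \emph{moved} regular $(2,4,8)$-dessin whose origami part is centre-free and genuinely of order $8$. Theorem~\ref{GDJZ} guarantees only some moved dessin, and naively refining it risks destroying the movement, while the degenerate dessins factoring through the Euclidean $(2,4,4)$-group carry only a small (cyclotomic) part of the action. I expect this upgrade -- from injectivity of $\varrho_{\Gamma}$ to the movement of ramified normal subgroups -- to be the crux: it should follow from the cofinality of centre-free quotients of the Fuchsian group $\hat{\Gamma}$ together with the slimness and visibility arguments above, but arranging the choice of $M$ so that the movement survives intersection with $\hat{\Gamma}$ is the step demanding genuine care.
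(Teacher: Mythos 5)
You have correctly reduced the theorem to group theory (Hurwitz origamis are exactly the open normal subgroups $N\trianglelefteq\hat{\Gamma}$ in which $[a,b]$ survives with order $2$), the realisation of $\Gamma$ as an index-$4$ normal subgroup of the $(2,4,8)$-triangle group $\Delta'$ is genuine, and your slimness argument does transfer injectivity of $\varrho_{\Delta'}$ (supplied by Theorem~\ref{GDJZ}) to injectivity of $\varrho_{\Gamma}$ on $\Gal(\qbar/\bbq(i))$; the Artin--Schreier descent to $\bbq$ is also fine. The paper obtains the same injectivity in one step from \cite[Theorem~C]{MR2895284}, applied to the group it calls $\hat{Q}$, which is your $\hat{\Gamma}$. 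But injectivity of the outer representation is \emph{not} the theorem, and the step you yourself flag as ``the crux'' is a genuine, unfilled gap: a nontrivial element of $\Out\hat{\Gamma}$ could a priori fix every open normal subgroup of $\hat{\Gamma}$, so faithfulness of the action on those subgroups does not follow from $\varrho_{\Gamma}(\sigma)\neq 1$.

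What is missing is precisely the second ingredient of the paper's proof: the generalisation of \cite[Theorem~27]{GonzalezDiez2013} from triangle groups to arbitrary cocompact Fuchsian groups, namely that a continuous automorphism of $\hat{\Gamma}$ preserving \emph{all} open normal subgroups must already be inner. This is a rigidity statement about arbitrary automorphisms of the profinite group and cannot be extracted from the statement of Theorem~\ref{GDJZ} alone; indeed Gonz\'{a}lez-Diez and Jaikin-Zapirain deduce Theorem~\ref{GDJZ} \emph{from} their Theorem~27 together with injectivity, so your plan of transporting Theorem~\ref{GDJZ} across $\hat{\Gamma}\trianglelefteq\hat{\Delta}'$ runs that implication backwards. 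Concretely, Theorem~\ref{GDJZ} hands you one moved dessin $M\trianglelefteq\hat{\Delta}'$, but nothing forces $M$ to have order-$8$ ramification or a centre-free origami part, and any repair such as replacing $M$ by $M\cap M_0$ for a fixed Galois-stable $M_0$ can lose the movement, since $\sigma(M)\neq M$ is perfectly compatible with $\sigma(M)\cap\hat{\Gamma}=M\cap\hat{\Gamma}$. Moreover your parenthetical dictionary is false: order-$8$ ramification is not equivalent to $M\subseteq\hat{\Gamma}$. The kernel of a surjection from $\hat{\Delta}'$ onto the semidihedral group of order $16$ (which is generated by a $(2,4,8)$-triple) has order-$8$ ramification but is not contained in $\hat{\Gamma}$, because every homomorphism from that group to $\bbz/4\bbz$ factors through its abelianisation $(\bbz/2\bbz)^2$ and so cannot refine the projection $\hat{\Delta}'\to\bbz/4\bbz$; conversely, pulling back isogeny covers of $E$ yields subgroups $M\subseteq\hat{\Gamma}$ normal in $\hat{\Delta}'$ with ramification order only $4$. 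So both the ``visibility'' step and the ``movement survives intersection'' step fail as stated, and the proposal does not prove the theorem without importing, and generalising, the group-theoretic Theorem~27 of \cite{GonzalezDiez2013} --- which is exactly how the paper proceeds.
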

\begin{proof}[Sketch of proof]
There exists a Deligne--Mumford stack $\mathcal{E}$ over $\bbq$ whose coarse moduli space is the chosen model of $E$ over $\bbq$, which stabiliser $\bbz /2\bbz$ at the origin and trivial stabilisers otherwise. The analytification of $\mathcal{E}_{\bbc }$ is an orbifold quotient $[Q\backslash\mathfrak{H}]$ where $Q\subset\SL_2(\bbr )$ is a quadrilateral group. The profinite completion $\hat{Q}$ can then be identified with the \'{e}tale fundamental group of $\mathcal{E}_{\qbar }$, and just as for Hurwitz curves we obtain a short exact sequence
$$1\to\pioneet (\mathcal{E}_{\qbar },\ast )\to\pioneet (\mathcal{E},\ast )\to\Gal (\qbar /\bbq )\to 1$$
and a continuous homomorphism $\varrho_Q\colon\Gal (\qbar /\bbq )\to\Out\hat{Q}$. It follows then again from \cite[Theorem~C]{MR2895284} that $\varrho_Q$ is injective.

Next we use a slight generalisation of \cite[Theorem~27]{GonzalezDiez2013}, which is formulated only for triangle groups but holds more generally with a similar proof for arbitrary cocompact Fuchsian groups. The case we need, which can be proved completely analogously to the special case discussed in \cite{GonzalezDiez2013}, states that a continuous group automorphism of $\hat{Q}$ which preserves all open normal subgroups must already be an inner automorphism.

Now we assume that $\sigma\in\Gal (\qbar /\bbq )$ preserves all Hurwitz origamis, that is, $\varrho_Q(\sigma )$ preserves all open normal subgroups of $\hat{Q}$. Hence it is trivial in $\Out\hat{Q}$, therefore $\sigma$ has to be the identity.
\end{proof}
The previous sketch of proof is extended to a detailed proof in \cite{MyThesis}.

Finally we remark that a statement analogous to Corollary~\ref{TheoremCprime} also holds for Hurwitz origamis, as long as ``with moduli field $\bbq$'' is replaced with ``admitting a model over $\bbq$''.

\providecommand{\bysame}{\leavevmode\hbox to3em{\hrulefill}\thinspace}
\providecommand{\MR}{\relax\ifhmode\unskip\space\fi MR }
\providecommand{\MRhref}[2]{%
  \href{http://www.ams.org/mathscinet-getitem?mr=#1}{#2}
}
\providecommand{\href}[2]{#2}

\end{document}